\newtheorem{theorem}{Theorem}[section]
\newcommand\relphantom[1]{\mathrel{\phantom{#1}}}
\title{(Weak) compactness of Hankel operators on $BMOA$}
\author{Michael Papadimitrakis}
\address{Department of Mathematics, University of Crete, Knossos Ave., 71409 Iraklio, Greece}
\email{papadim@math.uoc.gr}
\subjclass[2010]{47B35, 30H35, 30H10}
\keywords{Hankel operators, boundedness, compactness, weak compactness, Hardy spaces, bounded mean oscillation, logarithmic bounded mean oscillation}
\begin{document}
{\allowdisplaybreaks
\begin{abstract}
We prove that the notions of compactness and weak compactness for a Hankel operator on $BMOA$ are identical.
\end{abstract}
\maketitle

\section{Introduction and notation}

\par We write $z\in\mathbb{D}$, where $\mathbb{D}$ is the unit disc in the complex plane, and $\zeta\in\mathbb{T}=\partial\mathbb{D}$,
where $\mathbb{T}$ is the unit circle. The usual Lebesgue spaces for $\mathbb{T}$ are denoted by $L^p=L^p(\mathbb{T})$ and we write $f(\zeta)\sim
\sum_{n=-\infty}^{+\infty}\widehat{f}(n)\zeta^n$ for the Fourier series of a function $f$ in $L^1$. The Hardy spaces for $\mathbb{T}$ are defined by
$H^p=\{f\in L^p\,:\,\widehat{f}(n)=0\,\text{for}\,n<0\}$. The M. Riesz theorem says that the Riesz projection $P$, defined by
$$Pf(\zeta)\sim\sum_{n=0}^{+\infty}\widehat{f}(n)\zeta^n$$
for $f(\zeta)\sim\sum_{n=-\infty}^{+\infty}\widehat{f}(n)\zeta^n$, is a bounded operator $L^p\to H^p$ when $1<p<\infty$. The Szeg\"o projection or
Cauchy transform of $f$ at $z\in\mathbb{D}$ is defined by
$$Pf(z)=\frac 1{2\pi i}\int_{\mathbb{T}}\frac{f(\zeta)}{\zeta-z}d\zeta.$$
For $1\leq p<+\infty$ and every $f\in L^p$ the $\lim_{r\to 1-}Pf(r\zeta)$ exists for a.e. $\zeta\in\mathbb{T}$ and, when $1<p<+\infty$, this limit is
equal to $Pf(\zeta)$ (where $P$ is the Riesz projection) in both the a.e. sense and in the $L^p$ sense. If $p=1$, the $Pf(\zeta)=\lim_{r\to
1-}Pf(r\zeta)$ serves as the definition of $Pf$, which belongs to the space $L^{1,w}$ of weak-$L^1$ functions. In all cases $Pf(z)$ is an analytic
function of $z\in\mathbb{D}$.
\par A function $f$ is in $BMO$ if $f\in L^1$ and
$$\|f\|_*=\sup_I\frac 1{|I|}\int_I|f(\zeta)-f_I||d\zeta|<+\infty,$$
where $I$ is the general arc of $\mathbb{T}$, $f_I=\frac 1{|I|}\int_If(\zeta)|d\zeta|$ and $|I|$ is the length of $I$. $BMO$ is a Banach space with
the norm $\|f\|_{BMO}=|\widehat{f}(0)|+\|f\|_*$. The space $BMOA=BMO\cap H^1=\{f\in BMO\,:\,\widehat{f}(n)=0\,\text{for}\,n<0\}$ consists of all
analytic functions in $BMO$. It is well known that $L^\infty\subseteq BMO\subseteq L^p$ for $1\leq p<+\infty$.
\par The subspace $VMO$ of $BMO$ contains all $f\in L^1$ for which
$$\lim_{|I|\to 0+}\frac 1{|I|}\int_I|f(\zeta)-f_I||d\zeta|=0.$$
We also define $VMOA=VMO\cap H^1$. $VMOA$ is the closure of analytic polynomials in $BMOA$. The Riesz projection is a bounded operator $L^\infty\to
BMOA$ and also $BMO\to BMOA$.
\par We then have the spaces $BMO_{\log}$ and $VMO_{\log}$ and their variants $BMOA_{\log}$ and $VMOA_{\log}$. An $f\in L^1$ is in $BMO_{\log}$ if
$$\|f\|_{**}=\sup_I\frac{\log\frac{4\pi}{|I|}}{|I|}\int_I|f(\zeta)-f_I||d\zeta|<+\infty.$$
$BMO_{\log}$ is a Banach space with the norm $\|f\|_{BMO_{\log}}=|\widehat{f}(0)|+\|f\|_{**}$. We define $BMOA_{\log}=BMO_{\log}\cap H^1$. Clearly,
$BMO_{\log}\subseteq BMO$.
\par The subspace $VMO_{\log}$ of $BMO_{\log}$ contains all $f\in L^1$ for which
$$\lim_{|I|\to 0+}\frac{\log\frac{4\pi}{|I|}}{|I|}\int_I|f(\zeta)-f_I||d\zeta|=0.$$
We also define $VMOA_{\log}=VMO_{\log}\cap H^1$.
\par For each arc $I$ we define $S(I)=\{z\in\mathbb{D}\,:\,0<1-|z|<\frac{|I|}{2\pi}, \frac z{|z|}\in I\}$, the Carleson square with base $I$. A
positive Borel measure $\mu$ in $\mathbb{D}$ is called a Carleson measure if
$$\sup_I\frac{\mu(S(I))}{|I|}<+\infty.$$
It is known that $\mu$ is a Carleson measure if and only if 
$$\iint_{\mathbb{D}}|f(z)|^2d\mu(z)\leq c(\mu)\int_{\mathbb{T}}|f(\zeta)|^2|d\zeta|,\qquad\qquad f\in H^2$$
for some constant $c(\mu)$ and that, if $c(\mu)$ is the smallest such constant,
$$c(\mu)\asymp \sup_I\frac{\mu(S(I))}{|I|},$$
where $A\asymp B$ means that there are two positive numerical constants $c_1$ and $c_2$ so that $c_1\leq \frac AB\leq c_2$.
\par We know that $f\in H^1$ is in $BMOA$ if and only if the Borel measure $|f'(z)|^2(1-|z|^2)dm(z)$ is a Carleson measure and
$$\|f\|_*^2\asymp\sup_I\frac 1{|I|}\iint_{S(I)}|f'(z)|^2(1-|z|^2)dm(z).$$
Similarly, $f\in H^1$ is in $VMOA$ if and only if
$$\lim_{|I|\to 0+}\frac 1{|I|}\iint_{S(I)}|f'(z)|^2(1-|z|^2)dm(z)=0.$$
Analogously, for functions $f$ in $BMOA_{\log}$ we have
$$\|f\|_{**}^2\asymp\sup_I\frac{\log^2\frac{4\pi}{|I|}}{|I|}\iint_{S(I)}|f'(z)|^2(1-|z|^2)dm(z)$$
and for $f$ in $BMOA_{\log}$ 
$$\lim_{|I|\to 0+}\frac{\log^2\frac{4\pi}{|I|}}{|I|}\iint_{S(I)}|f'(z)|^2(1-|z|^2)dm(z)=0.$$
Note that there exists a positive numerical constant $c$ so that $|f(z)|\leq c\|f\|_{BMO}\log\frac 2{1-|z|^2}$ for all $f\in BMOA$ and all
$z\in\mathbb{D}$. Conversely, there exists a positive numerical constant $c$ so that for all $z\in\mathbb{D}$ there exists an $f\in BMOA$ with
$\|f\|_{BMO}=1$ and $|f(z)|\geq c\log\frac 2{1-|z|^2}$.
\par Finally, with the Fefferman-Stein duality induced by the binary form 
$$\langle f,g \rangle=\lim_{r\to 1-}\frac 1{2\pi}\int_{\mathbb{T}}f(r\zeta)g(\overline{\zeta})|d\zeta|=\lim_{r\to 1-}\frac 1{2\pi
i}\int_{\mathbb{T}}f(r\zeta)g(\overline{\zeta})\overline{\zeta}d\zeta,$$
$BMOA$ is isomorphic to $(H^1)^*$ and $H^1$ is isomorphic to $(VMOA)^*$.
\par Let $a\in H^2$ be an analytic symbol with $\widehat{a}(0)=0$. The Hankel operator with symbol $a$ is defined by
$$H_a(f)=P(aJf),$$
where $J$ is defined by $Jf(\zeta)=\overline{\zeta}f(\overline{\zeta})\sim\sum_{n=-\infty}^{+\infty}\widehat{f}(-n-1)\zeta^n$. Note that $J$ turns an
analytic function $f$ into an antianalytic function $Jf$.
\par $H_a$ is well defined for analytic polynomials $f(\zeta)=\sum_{n=0}^N\widehat{f}(n)\zeta^n$. The set of analytic polynomials is dense in
each $H^p$ ($1\leq p<+\infty$) and there are classical results which specify, for every particular value of $p$, the necessary and sufficient
conditions on the symbol $a$ so that these operators are extended as bounded or even compact operators on $H^p$. The situation is described by the
following theorems.
\begin{theorem}
(Nehari, for $p=2$) Let $1<p<+\infty$. Then $H_a$ is bounded on $H^p$ if and only if $a\in BMOA$.
\end{theorem}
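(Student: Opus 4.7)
The plan is to reduce the operator inequality to a bilinear inequality and then invoke the Fefferman--Stein duality $(H^1)^*\cong BMOA$. For analytic polynomials $f,g$, I would first establish the identity
$$\langle H_a f, g\rangle = \langle b, fg\rangle,\qquad b(\zeta):=\sum_{k\geq 0}\widehat{a}(k+1)\zeta^k,$$
by a direct Fourier computation: since $Jf$ is antianalytic and $g$ is analytic, $\langle P(aJf),g\rangle=\langle aJf,g\rangle$, and matching coefficients yields the displayed sum. The auxiliary symbol $b$ is the backward shift of $a$; a standard argument via the Carleson characterization applied to $b'=a'/\zeta-a/\zeta^2$ together with the pointwise bound $|a(z)|\lesssim\|a\|_{BMO}\log\frac{2}{1-|z|^2}$ (both recalled in the excerpt) gives $\|b\|_{BMOA}\asymp\|a\|_{BMOA}$.

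For the sufficiency direction, assuming $a\in BMOA$, the identity combined with Fefferman--Stein duality and H\"older's inequality $\|fg\|_{H^1}\leq\|f\|_{H^p}\|g\|_{H^q}$ yields
$$|\langle H_a f, g\rangle|\leq c\|a\|_{BMOA}\|f\|_{H^p}\|g\|_{H^q}.$$
Since the same bilinear pairing realizes $(H^p)^*\cong H^q$, taking the supremum over $g$ in the unit ball of $H^q$ and using density of polynomials produces $\|H_a f\|_{H^p}\leq c\|a\|_{BMOA}\|f\|_{H^p}$.

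For the necessity direction, supposing $H_a$ is bounded on $H^p$, the identity gives the estimate $|\langle b, fg\rangle|\leq\|H_a\|\|f\|_{H^p}\|g\|_{H^q}$ on polynomials. To extend this to $|\langle b,F\rangle|\leq C\|F\|_{H^1}$ for arbitrary $F\in H^1$, I would invoke the canonical factorization $H^1=H^p\cdot H^q$: every $F\in H^1$ decomposes as $F=f_1 f_2$ with $f_1\in H^p$, $f_2\in H^q$, and $\|f_1\|_{H^p}\|f_2\|_{H^q}=\|F\|_{H^1}$, obtained from the inner--outer factorization $F=IO$ by splitting the outer factor into two outer functions of moduli $|F|^{1/p}$ and $|F|^{1/q}$. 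Fefferman--Stein duality $(H^1)^*\cong BMOA$ then places $b$ in $BMOA$, hence $a\in BMOA$ with comparable norm.

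The main obstacle is the $H^p\cdot H^q$ factorization underpinning the necessity step: one must produce the two outer factors as Poisson integrals of $(1/p)\log|F|$ and $(1/q)\log|F|$, handle the zeros of $F$ through the inner factor, and pass from polynomial $f,g$ to arbitrary $F\in H^1$ by density. Everything else is a mechanical combination of the bilinear identity with H\"older's inequality and the Fefferman--Stein duality.
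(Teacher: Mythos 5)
The paper does not prove Theorem 1.1 at all: it is quoted as classical background (Nehari for $p=2$, extended to $1<p<\infty$ via the M.~Riesz theorem), so there is no in-paper argument to compare yours against. Your route is the standard duality proof, and its skeleton is sound. The Fourier computation of the identity $\langle H_af,g\rangle=\langle b,fg\rangle$ checks out: $\widehat{aJf}(n)=\sum_{j\ge0}\widehat{a}(n+j+1)\widehat{f}(j)$, and since $\widehat{a}(0)=0$ your $b$ is exactly $\overline{\zeta}a(\zeta)$, the same auxiliary function the paper introduces in Section~2; the comparability $\|b\|_{BMOA}\asymp\|a\|_{BMOA}$ is immediate from multiplication by the unimodular Lipschitz function $\overline{\zeta}$, so the Carleson-measure detour you sketch is more than is needed. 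The sufficiency direction (Fefferman--Stein plus H\"older plus $(H^p)^*\cong H^q$ under the same bilinear form) is complete as written, and the strong factorization $F=f_1f_2$ with $\|f_1\|_{H^p}\|f_2\|_{H^q}=\|F\|_{H^1}$, built from the inner--outer decomposition, is correct.

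The one genuine gap is the limiting step you flag but do not resolve in the necessity direction. Your bilinear bound $|\langle b,fg\rangle|\le\|H_a\|\,\|f\|_{H^p}\|g\|_{H^q}$ is established only for polynomials $f,g$, while the factors $f_1,f_2$ of a general $F\in H^1$ are not polynomials. Approximating $f_1,f_2$ by polynomials in $H^p$ and $H^q$ makes the products converge to $F$ in $H^1$, but you cannot conclude $\langle b,f^{(n)}_1f^{(n)}_2\rangle\to\langle b,F\rangle$ from $H^1$-convergence, because the $H^1$-continuity of $\langle b,\cdot\rangle$ is precisely the statement $b\in BMOA$ that you are trying to prove; a priori $b$ is only in $H^2$. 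The standard repair is to run the argument for the dilations $b_\rho(z)=b(\rho z)$, which lie in $H^\infty$ so that $\langle b_\rho,\cdot\rangle$ is automatically continuous on $H^1$, verify that the bilinear bound holds uniformly in $\rho$ (e.g.\ because $H_{a_\rho}$ is dominated by $H_a$ composed with the contractive dilation operators on $H^p$ and $H^q$), deduce $\sup_{\rho<1}\|b_\rho\|_{BMOA}<\infty$, and then let $\rho\to1^-$. Without some such device the necessity half is not closed.
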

\begin{theorem}
(Hartman, for $p=2$) Let $1<p<+\infty$. Then $H_a$ is compact on $H^p$ if and only if $a\in VMOA$.
\end{theorem}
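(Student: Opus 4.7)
The argument splits naturally into the two implications.

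\emph{Sufficiency} ($a\in VMOA\Rightarrow H_a$ compact on $H^p$). The strategy is approximation by finite-rank operators. For an analytic polynomial $a(\zeta)=\sum_{k=1}^N\widehat a(k)\zeta^k$, the identity $J\zeta^n=\zeta^{-n-1}$ gives $a(\zeta)J\zeta^n=\sum_{k=1}^N\widehat a(k)\zeta^{k-n-1}$, and all these powers are negative once $n\geq N$, so $H_a(\zeta^n)=0$ there and $H_a$ has rank at most $N$. The preceding section recorded that analytic polynomials are dense in $VMOA$ in the $BMO$ norm, and Nehari gives $\|H_a\|_{H^p\to H^p}\leq C_p\|a\|_{BMOA}$. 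Hence any polynomial approximation $a_n\to a$ in $BMOA$ produces $\|H_a-H_{a_n}\|\to 0$, and $H_a$ is a norm limit of finite-rank operators, therefore compact.

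\emph{Necessity} ($H_a$ compact on $H^p\Rightarrow a\in VMOA$). Since compactness implies boundedness, Nehari already delivers $a\in BMOA$; the task is to upgrade to vanishing mean oscillation. The plan is to test $H_a$ on the $H^p$-normalized Cauchy kernels
\[
K_w(\zeta)=\frac{(1-|w|^2)^{1-1/p}}{1-\bar w\zeta},\qquad w\in\mathbb D,
\]
which satisfy $\|K_w\|_{H^p}\asymp 1$. I would first verify that $K_w\to 0$ weakly in $H^p$ as $|w|\to 1^-$: any $\phi\in L^{p'}$ can be replaced by $g=P\phi\in H^{p'}$, the pairing equals $(1-|w|^2)^{1-1/p}\overline{g(w)}$ by Cauchy, and the pointwise bound $|g(w)|\lesssim\|g\|_{H^{p'}}(1-|w|^2)^{-1/p'}$ combines with polynomial density to give $\langle K_w,\phi\rangle\to 0$. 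Compactness then forces $\|H_aK_w\|_{H^p}\to 0$.

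A short residue computation (starting from $JK_w(\zeta)=(1-|w|^2)^{1-1/p}(\zeta-\bar w)^{-1}$ on $\mathbb T$ and closing the Cauchy contour inside $\mathbb D$) produces the closed formula
\[
H_aK_w(z)=(1-|w|^2)^{1-1/p}\,\frac{a(z)-a(\bar w)}{z-\bar w},\qquad z\in\mathbb D.
\]
The main obstacle is converting $\|H_aK_w\|_{H^p}\to 0$ into the Carleson-vanishing characterization of $VMOA$ recalled in the preceding section, namely $\tfrac{1}{|I|}\iint_{S(I)}|a'(z)|^2(1-|z|^2)\,dm(z)\to 0$ as $|I|\to 0+$. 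My plan is to differentiate $H_aK_w$ in order to bring out $a'(z)$ under the integral, invoke the Littlewood-Paley equivalence for the $H^p$ norm, and then for each small arc $I$ specialize $w=(1-|I|/(2\pi))\overline{\zeta_I}$ with $\zeta_I$ the midpoint of $I$, so that the weight $(1-|w|^2)^{2-2/p}|z-\bar w|^{-2}$ concentrates on the Carleson square $S(I)$ and reproduces, up to constants, its normalized mass. For $p=2$ the chain collapses at once to a Poisson integral of $|a-a(\bar w)|^2$ at $\bar w$ and the implication is immediate; for $p\neq 2$ the Littlewood-Paley step is the delicate point.
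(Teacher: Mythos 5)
The paper states this theorem only as classical background (Hartman's theorem) and gives no proof of it, so there is nothing internal to compare against; your proposal has to stand on its own. The sufficiency half does: the computation $H_a(\zeta^n)=P\bigl(\sum_{k=1}^N\widehat a(k)\zeta^{k-n-1}\bigr)=0$ for $n\geq N$ correctly shows polynomial symbols give finite-rank operators, and combining the density of polynomials in $VMOA$ with the quantitative Nehari bound $\|H_a\|_{H^p\to H^p}\leq C_p\|a\|_{BMO}$ (which is standard but is more than the qualitative statement of Theorem 1.1, so you should say where it comes from -- e.g. the closed graph theorem applied to $a\mapsto H_a$) yields compactness. In the necessity half, the weak-null family $K_w$, the duality computation, and the residue formula $H_aK_w(z)=(1-|w|^2)^{1-1/p}\frac{a(z)-a(\bar w)}{z-\bar w}$ are all correct, and for $p=2$ the conclusion does follow since $\|H_aK_w\|_{H^2}^2$ is comparable to the Poisson average of $|a-a(\bar w)|^2$ at $\bar w$, whose vanishing as $|w|\to 1^-$ is the Garsia characterization of $VMO$.

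The genuine gap is the last step for $p\neq 2$, which you yourself flag as "the delicate point" without resolving it. The route you sketch -- differentiate, invoke a Littlewood--Paley characterization of $H^p$, then localize to a Carleson square -- is problematic: for $p\neq 2$ the Littlewood--Paley expression is an $L^p$ norm of a square function, and there is no straightforward way to bound the normalized $L^2$ Carleson quantity $\frac1{|I|}\iint_{S(I)}|a'|^2(1-|z|^2)\,dm$ from above by a small multiple of that $L^p$ quantity, particularly when $p<2$. The good news is that your own closed formula already closes the gap much more cheaply if you target the $L^1$ oscillation definition of $VMO$ instead of the quadratic Carleson characterization: fix an arc $I$, take $\bar w=(1-\frac{|I|}{2\pi})\zeta_I$ with $\zeta_I$ the midpoint of $I$, and observe that for $\zeta\in I$ one has $|\zeta-\bar w|\asymp|I|$ and $1-|w|^2\asymp|I|$, so $|H_aK_w(\zeta)|\gtrsim |I|^{-1/p}|a(\zeta)-a(\bar w)|$ on $I$. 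Restricting the norm integral to $I$ and applying Jensen's inequality gives
$$\|H_aK_w\|_{H^p}\gtrsim\Bigl(\frac1{|I|}\int_I|a(\zeta)-a(\bar w)|^p\,|d\zeta|\Bigr)^{1/p}\geq\frac1{|I|}\int_I|a(\zeta)-a(\bar w)|\,|d\zeta|\geq\frac1{2|I|}\int_I|a(\zeta)-a_I|\,|d\zeta|,$$
and since $\|H_aK_w\|_{H^p}\to 0$ as $|I|\to 0+$ this is exactly $a\in VMO$, uniformly in $p\in(1,\infty)$. I recommend replacing the Littlewood--Paley plan with this estimate.
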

\begin{theorem}
(Janson-Peetre-Semmes, 1984; Cima-Stegenga, 1987; Tolokonnikov, 1987) $H_a$ is bounded on $H^1$ if and only if $a\in BMOA_{\log}$.
\end{theorem}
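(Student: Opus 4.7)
The plan is to handle both implications via the Fefferman--Stein duality $(H^1)^*\cong BMOA$ together with Littlewood--Paley techniques. For analytic polynomials $f,g$ a direct matching of Fourier coefficients gives the key identity
\begin{equation*}
\langle H_af,g\rangle=\sum_{m\ge 1}\widehat a(m)\,\widehat{fg}(m-1)=\langle a,\zeta fg\rangle,
\end{equation*}
so boundedness of $H_a$ on $H^1$ is equivalent, via duality, to the trilinear estimate
\begin{equation*}
|\langle a,\zeta fg\rangle|\le C\,\|a\|_{BMOA_{\log}}\,\|f\|_{H^1}\,\|g\|_{BMOA}.
\end{equation*}

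For the sufficient direction I would express this pairing as an area integral on $\mathbb{D}$ by means of a Green/Littlewood--Paley identity, obtaining (up to harmless boundary terms) an expression of the form
\begin{equation*}
\iint_{\mathbb{D}}a'(z)\,\overline{(\zeta fg)'(z)}\,(1-|z|^2)\,dm(z).
\end{equation*}
After writing $(fg)'=f'g+fg'$ and factoring $f=f_1f_2$ with $f_j\in H^2$ and $\|f_1\|_{H^2}\|f_2\|_{H^2}\asymp\|f\|_{H^1}$, the summand containing $f'g$ is handled by absorbing $g$ through the pointwise estimate $|g(z)|\le c\,\|g\|_*\log\frac{2}{1-|z|^2}$ and then applying Cauchy--Schwarz in a splitting one of whose factors is $|a'(z)|^2(1-|z|^2)\log^2\frac{4\pi}{1-|z|^2}dm(z)$, a Carleson measure of norm $\asymp\|a\|_{**}^2$ that is extracted from the $BMOA_{\log}$ characterization recalled in the introduction by a standard dyadic decomposition of each $S(I)$. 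The $\log^2$ weight is precisely what compensates the squared pointwise $\log$ coming from $g\in BMOA$, and a Carleson embedding against $|f_2|^2$ brings in $\|f\|_{H^1}$ once combined with the Littlewood--Paley estimate for $f_1$. The symmetric summand containing $fg'$ is treated dually, splitting against the Carleson measure $|g'(z)|^2(1-|z|^2)dm(z)$ of norm $\asymp\|g\|_*^2$ on the $g$ factor.

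For the converse I would run a standard test-function argument. For each arc $I\subseteq\mathbb{T}$ pick a normalized analytic atom $f_I\in H^1$ with $\|f_I\|_{H^1}\lesssim 1$ essentially supported in the Carleson square $S(I)$ (a suitably weighted difference of reproducing kernels with poles at the top of $S(I)$, arranged to annihilate the mean, is the standard choice), and pair $H_af_I$ against a companion $g_I\in BMOA$ of unit norm designed to detect the oscillation of $a$ on $I$. One obtains the estimate
\begin{equation*}
\frac{\log\frac{4\pi}{|I|}}{|I|}\int_I|a(\zeta)-a_I|\,|d\zeta|\le C\,\|H_a\|_{H^1\to H^1},
\end{equation*}
and taking the supremum over $I$ gives $a\in BMOA_{\log}$. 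The \emph{main obstacle} lies in the sufficient direction: the single logarithm from the pointwise $BMOA$ bound on $g$ must be cancelled \emph{exactly} by the $\log^2$ weight in the $BMOA_{\log}$--Carleson measure, with no slack at all. This is what forces the estimate to be routed through a single area/tent-space pairing rather than split into independent $L^2\times L^\infty$ factors, since any such split would cost a logarithm and bring one back to the plain $BMOA$ condition instead of the logarithmic one.
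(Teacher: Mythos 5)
The paper does not actually prove Theorem 1.3: it is quoted as a classical result with references, so the only internal point of comparison is the set of estimates from \cite{PV} reproduced in the proof of Theorem 2.1. Your reduction is correctly begun --- the identity $\langle H_af,g\rangle=\langle a,\zeta fg\rangle$ checks out against the paper's definitions, and the duality reformulation is legitimate --- but the sufficiency argument has a genuine gap at its central step: the measure $d\nu(z)=|a'(z)|^2(1-|z|^2)\log^2\frac{4\pi}{1-|z|^2}\,dm(z)$ is \emph{not} a Carleson measure of norm $\lesssim\|a\|_{**}^2$ for general $a\in BMOA_{\log}$, and the ``standard dyadic decomposition'' you invoke in fact shows the opposite. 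The characterization recalled in the introduction gives $\iint_{S(J)}|a'|^2(1-|z|^2)\,dm\lesssim \|a\|_{**}^2\,|J|/\log^2\frac{4\pi}{|J|}$; covering the $k$-th dyadic layer $\{z\in S(I): 1-|z|\asymp 2^{-k}|I|\}$ by about $2^k$ boxes of side $2^{-k}|I|$, and noting that on that layer $\log^2\frac{4\pi}{1-|z|^2}\asymp\bigl(k+\log\frac{4\pi}{|I|}\bigr)^2$ exactly cancels the gain $\bigl(k+\log\frac{4\pi}{|I|}\bigr)^{-2}$, each layer contributes about $|I|\,\|a\|_{**}^2$ to $\nu(S(I))$, and the sum over $k$ diverges. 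The loss is real, not an artifact of the estimate: for the lacunary symbol $a(z)=\sum_{n\ge1}n^{-3/2}z^{2^n}$ one checks that $a\in BMOA_{\log}$ while $\iint_{\mathbb{D}}|a'|^2(1-|z|^2)\log^2\frac{4\pi}{1-|z|^2}\,dm\asymp\sum_n n^{-1}=+\infty$, so $\nu$ is not even finite. Consequently the Cauchy--Schwarz step that is supposed to absorb the pointwise bound $|g(z)|\le c\|g\|_*\log\frac{2}{1-|z|^2}$ collapses, and this is precisely the term (the one with $g$ undifferentiated) that carries the whole logarithmic content of the theorem; your other term, split against $|g'|^2(1-|z|^2)\,dm$, only uses $a,g\in BMOA$ and is unproblematic.

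The way the actual arguments (in \cite{PV}, and in the estimates of the terms $A$ and $B$ reproduced in the proof of Theorem 2.1) avoid this is instructive: the logarithmic gain is never converted into a pointwise weight on the measure. One fixes a single Carleson box $S(I)$, works with the plain Carleson measure $d\mu=|b'(z)|^2(1-|z|^2)\,dm(z)$ restricted to $S(I)$, and extracts the factor $\log^{-2}\frac{4\pi}{|I|}$ once, at scale $|I|$, from $\sup_{J\subseteq 3I}\mu(S(J))/|J|$, using that $\log\frac{4\pi}{|J|}\gtrsim\log\frac{4\pi}{|I|}$ for $J\subseteq 3I$. Your closing remark --- that the single $\log$ from $g\in BMOA$ must be cancelled exactly by the $\log^2$ in the $BMOA_{\log}$ condition, with no slack --- correctly identifies the tightness of the theorem, but the mechanism you propose for realizing the cancellation (a global weighted Carleson measure) is exactly the one that fails; the cancellation must be performed box by box. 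The necessity direction is also under-specified: the displayed estimate does not ``obtain'' itself, and the entire point there is the existence, for each $I$, of a unit-norm $BMOA$ test function $g_I$ with $|g_I(z_I)|\gtrsim\log\frac{4\pi}{|I|}$ --- which is exactly what the construction of $g$ in the paper's proof of Theorem 2.1 supplies in a harder, sequential form.
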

\begin{theorem}
(Papadimitrakis-Virtanen, 2008) $H_a$ is bounded on $H^1$ if and only if $a\in BMOA_{\log}$, in which case
$$\|H_a\|_{H^1\to H^1}\asymp \|a\|_{BMO_{\log}}.$$ 
\end{theorem}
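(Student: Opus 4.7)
Since Theorem 1.3 already establishes that $H_a$ is bounded on $H^1$ exactly when $a\in BMOA_{\log}$, the substance of this theorem is the two-sided quantitative comparison $\|H_a\|_{H^1\to H^1}\asymp\|a\|_{BMO_{\log}}$. I would handle the two inequalities separately, each by a classical Hardy-space technique made quantitative.

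For the upper bound $\|H_a\|_{H^1\to H^1}\lesssim\|a\|_{BMO_{\log}}$ my plan is to combine an atomic (or molecular) decomposition of $H^1$ with the Carleson-measure reformulation of the $BMOA_{\log}$ norm. Write $f=\sum_k\lambda_k b_k$ with $\sum_k|\lambda_k|\lesssim\|f\|_{H^1}$, where each $b_k$ is an atom attached to an arc $I_k$; it then suffices to bound $\|H_a b\|_{H^1}$ uniformly for a single atom $b$ supported near $I$. I would split $a=a_I+(a-a_I)$: the constant contributes nothing, by the cancellation $\int b=0$, so the task reduces to controlling $\|P((a-a_I)Jb)\|_{H^1}$. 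Using an area-integral expression for the $H^1$ norm together with the Carleson-type identity
$$\|a\|_{**}^2\asymp\sup_I\frac{\log^2(4\pi/|I|)}{|I|}\iint_{S(I)}|a'(z)|^2(1-|z|^2)\,dm(z),$$
the factor $\log(4\pi/|I|)$ appears naturally and absorbs the endpoint loss of the Riesz projection on $L^1$.

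For the lower bound $\|a\|_{BMO_{\log}}\lesssim\|H_a\|_{H^1\to H^1}$ I would argue via the Fefferman-Stein duality, which yields $\|H_a\|_{H^1\to H^1}\asymp\sup\{|\langle H_a f,g\rangle|:\|f\|_{H^1}=\|g\|_{BMOA}=1\}$. For an arc $I$, let $z_I\in\mathbb{D}$ be the centre of the Carleson square $S(I)$, so that $1-|z_I|\asymp|I|/2\pi$; take the normalised reproducing kernel $f_I(\zeta)=(1-|z_I|^2)/(1-\overline{z_I}\zeta)^2$, which has $\|f_I\|_{H^1}\asymp 1$, and pick a companion unit-norm $g_I\in BMOA$ realising $|g_I(z_I)|\gtrsim\log(2/(1-|z_I|^2))\asymp\log(4\pi/|I|)$, as permitted by the sharp pointwise estimate recorded in the preliminaries. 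The pairing $\langle H_a f_I,g_I\rangle$ localises near $S(I)$ and, up to controllable error, captures the product of the oscillation of $a$ over $I$ with the logarithmic pointwise value of $g_I$; taking the supremum over $I$ and optimising $g_I$ then recovers the full $BMO_{\log}$ norm of $a$.

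The principal obstacle is tracking the logarithmic factor sharply in both directions. Nehari's theorem yields $\|H_a\|_{H^2\to H^2}\asymp\|a\|_{BMO}$ with no log weight, because the Riesz projection is bounded on $L^2$; on $H^1$ the extra $\log(4\pi/|I|)$ appears exactly to offset the endpoint failure of $P$ on $L^1$. Engineering, for every arc $I$, a normalised $H^1$ test function together with a $BMOA$ dual element whose pairing under $H_a$ is simultaneously sharp in the oscillation of $a$ and in the logarithmic weight is the core technical difficulty, and this is where the sharpness of the pointwise BMOA growth $|g(z)|\leq c\|g\|_{BMO}\log(2/(1-|z|^2))$ and of its converse play the decisive role.
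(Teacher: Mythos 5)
First, a point of reference: the paper does not actually prove Theorem 1.4 --- it is quoted from \cite{PV} --- but the estimates that drive its proof are reproduced inside the proof of Theorem 2.1. There everything is run on the dual ($BMOA$) side: since $\langle H_af,g\rangle$ is symmetric in $f$ and $g$, one has $\|H_a\|_{H^1\to H^1}\asymp\|H_a\|_{BMOA\to BMOA}$, and the whole theorem is extracted from the identity for $H_a(f)'(z)-\overline{g(z)}b'(z)$ and the resulting bound
$$\frac 1{|I|}\iint_{S(I)}|H_a(f)'(z)-\overline{g(z_I)}b'(z)|^2(1-|z|^2)\,dm(z)\leq\frac c{\log\frac{4\pi}{|I|}}\,\|b\|_{**}^2\|g\|_*^2,$$
combined with the Carleson--measure descriptions of $\|\cdot\|_*$ and $\|\cdot\|_{**}$ and the two pointwise facts about $\log\frac 2{1-|z|^2}$ recorded in the preliminaries. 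Your plan (atoms on the $H^1$ side for the upper bound, reproducing kernels for the lower bound) is genuinely different, so it must stand on its own, and in both halves the decisive step is asserted rather than carried out.

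For the upper bound, the sentence claiming that $\log(4\pi/|I|)$ ``appears naturally and absorbs the endpoint loss'' is the entire content of the estimate $\|H_ab\|_{H^1}\lesssim\|a\|_{BMO_{\log}}$ for an atom $b$ attached to $I$. The route is viable, but the mechanism you need to exhibit is this: after subtracting $a_I$, the function $(a-a_I)Jb$ still has a nonzero mean of size $\frac 1{|I|}\int_I|a-a_I|$ over $I$, and projecting a bump with nonzero mean costs a factor $\log\frac 1{|I|}$ in $L^1$; this is admissible precisely because $\frac 1{|I|}\int_I|a-a_I|\leq\|a\|_{**}/\log\frac{4\pi}{|I|}$, which is where the $\log$ weight enters --- none of this is visible in your sketch. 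The lower bound is the more serious problem: pairing $H_a$ against a \emph{single} normalized reproducing kernel $f_I$ amounts to a point evaluation of $H_ag_I$ (and of $(1-|z_I|^2)(H_ag_I)'$) at $z_I$, which via the identity above controls only the quantity $(1-|z_I|^2)|a'(z_I)|\log\frac{4\pi}{|I|}$, i.e.\ a logarithmic Bloch condition. Since the Bloch condition is strictly weaker than membership in $BMOA$, this family of test functions cannot recover the full Carleson--square quantity $\frac{\log^2(4\pi/|I|)}{|I|}\iint_{S(I)}|a'|^2(1-|z|^2)dm\asymp\|a\|_{**}^2$, and your phrase ``captures the oscillation of $a$ over $I$'' is exactly the assertion that fails. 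The paper's route avoids this by bounding the entire area integral of $|(H_ag_I)'|^2(1-|z|^2)$ over $S(I)$ from below using $\|H_ag_I\|_*\leq\|H_a\|\,\|g_I\|_*$, rather than a single pairing; to salvage your version you would need either to do the same or to use a richer family of $H^1$ test functions adapted to $a-a_I$ on $I$.
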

\begin{theorem}
(Papadimitrakis-Virtanen, 2008) $H_a$ is compact on $H^1$ if and only if $a\in VMOA_{\log}$.
\end{theorem}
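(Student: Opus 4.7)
The strategy is to leverage the norm equivalence $\|H_a\|_{H^1\to H^1}\asymp\|a\|_{BMO_{\log}}$ from the preceding theorem. It turns the sufficiency direction into a density question in $VMOA_{\log}$ and the necessity direction into the construction of test functions in the unit ball of $H^1$ that detect the logarithmically weighted Carleson condition defining $VMOA_{\log}$. For sufficiency, note first that if $p$ is an analytic polynomial of degree $N$, then $H_p$ is of finite rank on $H^1$: since $pJf$ has Fourier support in $\{\dots,-1,0,\dots,N-1\}$, the projection $P(pJf)$ is a polynomial of degree at most $N-1$. Finite rank implies compact, so assuming $a\in VMOA_{\log}$ it suffices to exhibit polynomials $p_n$ with $\|a-p_n\|_{BMO_{\log}}\to 0$; the norm equivalence then yields $\|H_a-H_{p_n}\|_{H^1\to H^1}\to 0$ and compactness of $H_a$ as a uniform limit of compact operators. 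The density is obtained in the classical two-step manner: first show that the dilates $a_r(\zeta)=a(r\zeta)$ satisfy $\|a-a_r\|_{BMO_{\log}}\to 0$ as $r\to 1-$, by splitting arcs into small scales (where the $VMOA_{\log}$ vanishing condition controls the $S(I)$ integrals uniformly in $r$) and large scales (handled by dominated convergence), and then approximate each $a_r$, analytic on a neighbourhood of $\overline{\mathbb{D}}$, by its Taylor polynomials uniformly.

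For necessity, suppose $H_a$ is compact on $H^1$, so in particular $a\in BMOA_{\log}$. Arguing by contrapositive, assume $a\notin VMOA_{\log}$, so that there exist arcs $I_n$ with $|I_n|\to 0$ and a constant $c>0$ such that
$$\frac{\log^2(4\pi/|I_n|)}{|I_n|}\iint_{S(I_n)}|a'(z)|^2(1-|z|^2)\,dm(z)\ge c.$$
The plan is to build $f_n\in H^1$ with $\|f_n\|_{H^1}\le 1$, $f_n\to 0$ weakly, and $\|H_a f_n\|_{H^1}\ge c'>0$, contradicting compactness. Let $z_n=(1-|I_n|/(2\pi))\zeta_{I_n}$, with $\zeta_{I_n}$ the centre of $I_n$. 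Natural candidates for $f_n$ are modifications of the $H^1$-normalised kernels $(1-|z_n|^2)/(1-\overline{z_n}\zeta)^2$, adjusted by a factor involving $\log(1/(1-|z_n|^2))$ so as to simultaneously secure $H^1$-boundedness, weak nullity in the Fefferman--Stein pairing against every $BMOA$ functional, and a lower bound for $\|H_a f_n\|_{H^1}$ matching the displayed quantity. The lower bound is obtained by dualising $H_a f_n$ against a carefully chosen $VMOA$ functional built from $a$ localised to $S(I_n)$, converting the pairing via Green's theorem into an area integral comparable to the left-hand side above.

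The main obstacle is the choice of $f_n$. For the $H^2\to H^2$ case (Hartman's theorem) the normalised reproducing kernels $(1-|z_n|^2)^{1/2}/(1-\overline{z_n}\zeta)$ already vanish weakly, because the Hilbert-space pairing gains an extra factor $(1-|z_n|^2)^{1/2}$ that dominates the logarithmic growth at $z_n$ of any $BMOA$ function. In $H^1$ that gain is absent, and any candidate kernel pairs with a $BMOA$ functional to give its value at $z_n$, which need not tend to zero. Weak nullity must therefore be arranged by hand, either via a logarithmic damping factor that must also reappear compatibly in the lower bound, or by passing to difference or atomic combinations whose pairing against $BMOA$ vanishes. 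Balancing these competing requirements, without sacrificing the sharp $\log^2$ weight needed for $VMOA_{\log}$, is the crux of the proof.
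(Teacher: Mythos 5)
First, a point of reference: the paper does not prove this statement. Theorem 1.5 is quoted from \cite{PV} and used as a black box; the only hard implication proved in this paper is (5) $\Rightarrow$ (6) of Theorem 2.1, which is closely related to the necessity half of Theorem 1.5. Measured against that, your sufficiency half is essentially correct and standard: $H_p$ has finite rank for polynomials $p$ (your Fourier-support computation is right), the norm equivalence of Theorem 1.4 converts operator-norm approximation of $H_a$ into $BMO_{\log}$-approximation of $a$, and the density of polynomials in $VMOA_{\log}$ follows from the dilation argument you sketch, once one verifies the uniform bound $\|a_r\|_{**}\leq c\|a\|_{**}$ and the small-arc estimate for $a-a_r$; these are routine.

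The necessity half, however, is not a proof. You set up the contrapositive, propose the normalized kernels $(1-|z_n|^2)/(1-\overline{z_n}\zeta)^2$, correctly observe that they fail to be weakly null in $H^1$ (pairing such a kernel with $g\in BMOA$ yields $(1-|w|^2)\bigl(g(\overline w)+\overline w g'(\overline w)\bigr)$, which is bounded but need not vanish, e.g.\ for $g(\zeta)=\log\frac 1{1-\zeta}$), and then declare the resolution of this obstruction to be ``the crux.'' That crux is precisely what a proof must supply, so as written there is a genuine gap. One way to close it without ever producing weakly null test functions: the Hankel operator is formally self-adjoint under the Fefferman--Stein pairing, since $\langle H_af,g\rangle=\sum_{n,k\geq 0}\widehat a(n+k+1)\widehat f(k)\widehat g(n)$ is symmetric in $f$ and $g$; hence by Schauder's and Gantmacher's theorems compactness of $H_a$ on $H^1$ forces $H_a(BMOA)\subseteq VMOA$, and it remains to exhibit, for arcs $I_n$ violating the $VMOA_{\log}$ condition, a single $g\in BMOA$ with $\limsup_n |g(z_{I_n})|/\log\frac{4\pi}{|I_n|}>0$. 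That construction is the real work, and it is carried out in the paper's proof of Theorem 2.1: pass to the upper half-plane and take $g=\sum_k(\phi+iH\phi)\bigl((z-x_k)/\sqrt{y_k}\bigr)$ for a fixed odd bump $\phi$; the $\sqrt{y_k}$ scaling keeps $\sum_k\phi_k$ bounded (disjoint supports, hence $g\in BMOA$) while $|(\phi+iH\phi)(i\sqrt{y_n})|\geq\frac 12\log\frac 1{y_n}$ delivers the logarithmic size at $z_{I_n}$, the cross terms being summable after a lacunarity reduction. Without this device, or the test functions actually used in \cite{PV}, your argument does not establish necessity.
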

\par Of course, because of the dualities between $H^1$, $BMOA$ and $VMOA$, the above results about $H^1$ hold also for $BMOA$ and $VMOA$.

\section{The main result}

\begin{theorem}
Let $H_a$ be bounded on $BMOA$, i.e. $a\in BMOA_{\log}$. Then the following are equivalent:
\begin{enumerate}
\item $H_a$ is weakly compact on $VMOA$.
\item $H_a$ is compact on $VMOA$.
\item $H_a$ is weakly compact on $BMOA$.
\item $H_a$ is compact on $BMOA$.
\item $H_a(BMOA)\subseteq VMOA$.
\item $a\in VMOA_{\log}$. 
\end{enumerate}
\end{theorem}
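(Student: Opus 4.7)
The plan is to close a cycle of implications using three tools: the symmetry of $H_a$ under the Fefferman--Stein pairing (which makes $H_a$ on $VMOA$, $H^1$, and $BMOA$ mutually adjoint, via $BMOA=(H^1)^*$ and $H^1=(VMOA)^*$); Schauder's and Gantmacher's theorems; and a polynomial approximation in the $BMOA_{\log}$ norm. The trivial implications (2) $\Rightarrow$ (1) and (4) $\Rightarrow$ (3) are immediate.

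For the implications out of (6): a direct computation on analytic polynomials gives $\langle H_af,g\rangle=\langle f,H_ag\rangle=\sum_{n,m\ge 0}\widehat a(n+m+1)\widehat f(n)\widehat g(m)$. Hence $H_a$ on $BMOA$ is the Banach-space adjoint of $H_a$ on $H^1$, which in turn is the adjoint of $H_a$ on $VMOA$. Combined with Theorem 1.5 (compactness on $H^1$ when $a\in VMOA_{\log}$), Schauder's theorem delivers (2) and (4). For (5), I approximate $a$ by its Fej\'er means $a_N$; these are polynomials converging to $a$ in the $BMOA_{\log}$ norm, so by Theorem 1.4 the operators $H_{a_N}$ converge to $H_a$ in the operator norm on $BMOA$. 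Each $H_{a_N}$ has polynomial range, and $VMOA$ is norm-closed in $BMOA$, so $H_a(BMOA)\subseteq VMOA$.

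Gantmacher's theorem handles the weak compactness equivalences. Using $VMOA^{**}=BMOA$ together with the fact that the bitranspose of $H_a:VMOA\to VMOA$ coincides with $H_a:BMOA\to BMOA$ (which follows from the weak-$*$ continuity of the extended $H_a$ tested against $H^1$, again a consequence of the pairing symmetry), Gantmacher's criterion gives (1) $\iff$ (5) verbatim: weak compactness on $VMOA$ means exactly that the bitranspose lands in $VMOA\subseteq BMOA$. Applied to the $H^1$-$BMOA$ adjoint pair, Gantmacher's theorem also shows that (3) is equivalent to $H_a$ being weakly compact on $H^1$.

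The remaining implication---weak compactness of $H_a$ on $H^1$ forces $a\in VMOA_{\log}$---closes the cycle and is the main analytic obstacle. I plan to argue contrapositively: if $a\in BMOA_{\log}\setminus VMOA_{\log}$, there exist arcs $I_n$ with $|I_n|\to 0$ on which the log-weighted Carleson mass of $|a'(z)|^2(1-|z|^2)$ stays above a fixed $\varepsilon>0$, and I will construct a bounded sequence $(f_n)\subseteq H^1$---localized test functions such as normalized Cauchy kernels or $H^1$-atoms supported in $S(I_n)$---so that $|H_af_n|$ concentrates a fixed portion of its mass on the arc $I_n$. By the Dunford--Pettis characterization of weakly relatively compact subsets of $L^1$, $\{H_af_n\}$ then has no weakly convergent subsequence in $H^1$, contradicting weak compactness of $H_a$. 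This analytic construction, rather than the functional-analytic reductions, is the heart of the proof.
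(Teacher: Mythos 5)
Your functional-analytic scaffolding is sound and coincides with what the paper dispatches as ``general considerations'': the coefficient identity $\langle H_af,g\rangle=\langle f,H_ag\rangle$ makes the three incarnations of $H_a$ mutually adjoint, Schauder gives $(2)\Leftrightarrow(4)\Leftrightarrow(6)$ via Theorem 1.5, and Gantmacher (with $VMOA^{**}=BMOA$ and the bitranspose identification) gives $(1)\Leftrightarrow(3)\Leftrightarrow(5)$. You correctly isolate a single hard implication. (Two small remarks: your Fej\'er-mean argument for $(6)\Rightarrow(5)$ silently uses that $VMOA_{\log}$ is the $BMOA_{\log}$-closure of the polynomials, which the paper states only for $VMOA$; and that step is in any case redundant once $(6)\Rightarrow(2)\Rightarrow(1)\Rightarrow(5)$ is in place.)

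The genuine gap is the hard implication itself, which you leave as a plan rather than a proof. You propose to show that weak compactness on $H^1$ fails when $a\notin VMOA_{\log}$ by producing a bounded sequence $(f_n)\subseteq H^1$ such that $|H_af_n|$ concentrates a fixed amount of $L^1$-mass on the shrinking arcs $I_n$, and then invoking Dunford--Pettis. By Dunford--Pettis this is indeed the \emph{only} possible obstruction on the $H^1$ side, but establishing it requires a \emph{localized lower bound} $\int_{E_n}|H_af_n|\,|d\zeta|\geq\varepsilon$ with $|E_n|\to 0$, which is substantially harder than the norm lower bound $\|H_af_n\|_{H^1}\gtrsim\varepsilon$ that the standard test functions (normalized Cauchy kernels, atoms on $I_n$) are known to give: a norm lower bound is obtained by pairing against a $BMOA$ function and says nothing about where the mass of $H_af_n$ lives. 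You give no construction and no estimate for this concentration, and it is exactly here that the entire analytic content of the theorem sits. The paper works instead on the predual--bidual side of the same Gantmacher equivalence, proving $(5)\Rightarrow(6)$: it shows that $(H_af)'(z)$ is well approximated on $S(I)$ by $\overline{g(z_I)}\,b'(z)$ up to an error of order $\|b\|_{**}^2\|g\|_*^2/\log\frac{4\pi}{|I|}$ in the Carleson-measure norm, which reduces everything to constructing a \emph{single} $g\in BMOA$ with $|g(z_{I_n})|\gtrsim\log\frac1{|I_n|}$ along the bad arcs, i.e.\ attaining the extremal logarithmic growth of $BMOA$ functions at a prescribed sequence of points. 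That construction (the $\phi+iH\phi$ building block, the dichotomy $|x_n|\leq c\sqrt{y_n}$ versus $|x_n|/\sqrt{y_n}\to\infty$, and the lacunary selection making $\sum y_k/x_k^2<\infty$) is the heart of the paper and has no counterpart in your proposal. Until you either carry out your $L^1$-concentration estimate or supply an analogue of this extremal construction, the proof is incomplete.
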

\begin{proof}
\par General considerations show the equivalences between (1), (3) and (5), between (2) and (4) and that (2) implies (1). Also, Theorem 1.5 shows
the equivalence between (2) or (4) and (6). Therefore, it remains to prove that (5) implies (6).
\par The symbol $a(\zeta)$ of $H_a$ and the function $b(z)$ are connected by
$$b(z)=\frac 1{2\pi i}\int_T\frac{\overline{\zeta}a(\zeta)}{\zeta-z}d\zeta$$
or, equivalently, $b(\zeta)=\overline{\zeta}a(\zeta)$ for the boundary values of $b$.
\par Also, the variable $f(\zeta)$ of $H_a(f)$ and $g(z)$ are connected by
$$g(z)=\frac 1{2\pi i}\int_T\frac{\overline{f(\overline{\zeta})}}{\zeta-z}d\zeta$$
or, $g(\zeta)=\overline{f(\overline{\zeta})}$ for the boundary values of $g$. 
\par For any arc $I$ of $\mathbb{T}$ let $z_I$ be the midpoint of the inner side of the Carleson square $S(I)$. Let $f\in BMOA$ and $a\in
BMOA_{\log}$ or, equivalently, $g\in BMOA$ and $b\in BMOA_{\log}$. Then
$$\|f\|_*=\|g\|_*,\qquad \|a\|_{**}\asymp\|b\|_{**}.$$
\par It is easy to show that
$$H_a(f)'(z)-\overline{g(z)}b'(z)=\frac 1{2\pi i}\int_{\mathbb{T}}\frac{(b(\zeta)-b(z))\overline{(g(\zeta)-g(z))}}{(\zeta-z)^2}d\zeta,$$
from which we get
\begin{equation}
\begin{split}
|H_a(f)'(z)-\overline{g(z_I)}b'(z)|&\leq\frac
1{2\pi}\int_{\mathbb{T}}\frac{|b(\zeta)-b(z)||g(\zeta)-g(z)|}{|\zeta-z|^2}\,|d\zeta|\notag\\
&\relphantom{\leq}{}+|g(z)-g(z_I)||b'(z)|.\notag
\end{split}
\end{equation}
Applying the Cauchy-Schwarz inequality together with standard estimates for functions in $BMOA$ and in $BMOA_{\log}$, we get
$$\frac 1{2\pi}\int_{\mathbb{T}}\frac{|b(\zeta)-b(z)||g(\zeta)-g(z)|}{|\zeta-z|^2}\,|d\zeta|\leq c\|b\|_{**}\|g\|_*\frac 1{(1-|z|^2)\log\frac
2{1-|z|^2}}.$$
In what follows, $c$ denotes a positive numerical constant, not necessarily the same at each occurrence. This, for every arc $I$, implies
\begin{equation}
\begin{split}
\frac 1{|I|}\iint_{S(I)}&|(H_af)'(z)-\overline{g(z_I)}b'(z)|^2(1-|z|^2)dm(z)\notag\\
&\leq c\|b\|_{**}^2\|g\|_*^2\frac 1{|I|}\iint_{S(I)}\frac 1{(1-|z|^2)\log^2\frac 2{1-|z|^2}}dm(z)\notag\\
&\relphantom{|}{}+\frac c{|I|}\iint_{S(I)}|g(z)-g(z_I)|^2|b'(z)|^2(1-|z|^2)dm(z)\notag\\
&=A+B.\notag
\end{split}
\end{equation}
A direct calculation of the integral of the term $A$ gives
$$A\leq c\,\frac 1{\log\frac{4\pi}{|I|}}\,\|b\|_{**}^2\|g\|_*^2.$$
Observing that $|1-\overline{z_I}z|\asymp |I|$ for all $z\in S(I)$ and considering the Borel measure $d\mu(z)$ which is equal to
$|b'(z)|^2(1-|z|^2)dm(z)$ on $S(I)$ and equal to zero on $\mathbb{D}\setminus S(I)$, we find
\begin{equation}
\begin{split}
B&\leq c|I|\iint_{S(I)}\frac{|g(z)-g(z_I)|^2}{|1-\overline{z_I}z|^2}|b'(z)|^2(1-|z|^2)dm(z)\notag\\
&\leq c|I|\sup_J\frac{\mu(S(J))}{|J|}\int_{\mathbb{T}}\frac{|g(\zeta)-g(z_I)|^2}{|1-\overline{z_I}\zeta|^2}|d\zeta|\notag\\
&\leq c\sup_J\frac{\mu(S(J))}{|J|}\int_{\mathbb{T}}|g(\zeta)-g(z_I)|^2\frac{1-|z_I|^2}{|\zeta-z_I|^2}|d\zeta|\notag\\
&\leq c\|g\|_*^2\sup_J\frac{\mu(S(J))}{|J|}.\notag
\end{split}
\end{equation}
Estimating $\frac{\mu(S(J))}{|J|}=\frac{\mu(S(J)\cap S(I))}{|J|}$, we observe that we need only consider arcs $J$ with $J\cap I\neq\emptyset$. If
$|J|>|I|$, then $\frac{\mu(S(J))}{|J|}\leq\frac{\mu(S(I))}{|I|}$. If $|J|\leq |I|$, then $J\subseteq 3I$, where $3I$ is the arc with the same midpoint
as $I$ and with length three times the length of $I$. Hence, in both cases we get 
\begin{equation}
\begin{split}
\sup_J\frac{\mu(S(J))}{|J|}&\leq \sup_{J\subseteq 3I}\frac 1{|J|}\iint_{S(J)}|b'(z)|^2(1-|z|^2)dm(z)\notag\\
&\leq c\sup_{J\subseteq 3I}\frac 1{\log^2\frac{4\pi}{|J|}}\|b\|_{**}^2\leq\frac c{\log^2\frac{4\pi}{|I|}}\|b\|_{**}^2.\notag
\end{split}
\end{equation}
Therefore,
$$B\leq\frac c{\log^2\frac{4\pi}{|I|}}\,\|b\|_{**}^2\|g\|_*^2$$
and, finally, 
$$\frac 1{|I|}\iint_{S(I)}|H_a(f)'(z)-\overline{g(z_I)}b'(z)|^2(1-|z|^2)dm(z)\leq\frac c{\log\frac{4\pi}{|I|}}\|b\|_{**}^2\|g\|_*^2.$$
\par All the previous estimates of the terms $A$ and $B$ are contained in \cite{PV} and played a substantial role in the proofs of Theorems 1.4 and
1.5. We included them here for the sake of completeness. We also changed them slightly to make them more accessible for the present work.
\par Although this is not our immediate purpose, it is straightforward to show that (6) impies (5). Suppose that $b\in VMOA_{\log}$. Then
\begin{equation}
\begin{split}
\frac 1{|I|}&\iint_{S(I)}|H_a(f)'(z)|^2(1-|z|^2)dm(z)\notag\\
&\leq 2|g(z_I)|^2\frac 1{|I|}\iint_{S(I)}|b'(z)|^2(1-|z|^2)dm(z)+\frac c{\log\frac{4\pi}{|I|}}\|b\|_{**}^2\|g\|_*^2\notag\\
&\leq c\|g\|_*^2\frac {\log^2\frac{4\pi}{|I|}}{|I|}\iint_{S(I)}|b'(z)|^2(1-|z|^2)dm(z)+\frac c{\log\frac{4\pi}{|I|}}\|b\|_{**}^2\|g\|_*^2.\notag
\end{split}
\end{equation}
\par Therefore,
$$\lim_{|I|\to 0+}\frac 1{|I|}\iint_{S(I)}|H_a(f)'(z)|^2(1-|z|^2)dm(z)=0$$
and $H_a(f)\in VMOA$.
\par For the converse, let $b\notin VMOA_{\log}$. Then there is some $\delta>0$ and some sequence $(I_n)$ of arcs such that $|I_n|\to 0$ and
$$\frac {\log^2\frac{4\pi}{|I_n|}}{|I_n|}\iint_{S(I_n)}|b'(z)|^2(1-|z|^2)dm(z)\geq\delta$$
for all $n$. The previous estimates imply that
$$\frac
1{|I_n|}\iint_{S(I_n)}|H_a(f)'(z)|^2(1-|z|^2)dm(z)\geq\frac{\delta}2\frac{|g(z_{I_n})|^2}{\log^2\frac{4\pi}{|I_n|}}-\frac
c{\log\frac{4\pi}{|I_n|}}\|b\|_{**}^2\|g\|_*^2.$$
\par We shall construct some $g\in BMOA$ such that
$$\limsup_{n\to+\infty}\frac{|g(z_{I_n})|}{\log\frac{4\pi}{|I_n|}}>0.$$
This will imply that
$$\limsup_{n\to+\infty}\frac 1{|I_n|}\iint_{S(I_n)}|H_a(f)'(z)|^2dm(z)>0$$
and, hence, that $H_a(f)\notin VMOA$.\vspace{1em}\newline
\textbf{The construction of $g$}.\vspace{1em}
\par Taking a subsequence, we may assume that $I_n$ accumulate to some point of $\mathbb{T}$. For simplicity of the formulas we shall replace
$\mathbb{D}$ by the upper halfplane $\mathbb{H}=\{z=x+iy\,:\,y>0\}$ and $\mathbb{T}$ by the real line $\mathbb{R}$ and assume that the intervals $I_n$
of $\mathbb{R}$ accumulate to $0$.
\par Let $z_n=z_{I_n}=x_n+iy_n$, where $y_n=|I_n|\to 0$ and $x_n(\to 0)$ is the midpoint of $I_n$.
\par We consider a fixed function $\phi : (0,+\infty)\to[0,1]$ with the following properties:\vspace{0.5em}\newline
$(i)$ $\phi(x)=0$ for $x\geq 2$,\newline
$(ii)$ $\phi(x)=1$ for $0<x\leq 1$,\newline
$(iii)$ $\phi$ is smooth in $(0,+\infty)$.\vspace{0.5em}\newline
We then extend $\phi : (-\infty,0)\cup(0,+\infty)\to[-1,1]$ so that\vspace{0.5em}\newline
$(iv)$ $\phi$ is odd.\vspace{0.5em}
\par We also consider the Hilbert transform
$$H\phi(x)=p.v.\frac 1{\pi}\int_{\mathbb{R}}\frac{\phi(t)}{x-t}dt$$
and the analytic function
$$(\phi+iH\phi)(z)=\frac 1{\pi i}\int_{\mathbb{R}}\frac{\phi(t)}{z-t}dt,$$
where $z\in\mathbb{H}$. Now it is easy to prove that
$$|(\phi+iH\phi)(z)|\leq\frac 4{|z|^2},\qquad z\in\mathbb{H}, |z|\geq 3$$
and
$$|(\phi+iH\phi)(z)|\geq \log\frac 1{|z|},\qquad z\in\mathbb{H}, |z|\leq\delta$$
for some appropriate fixed $\delta$ such that $0<\delta\leq\frac 12$. For the first, we assume $|z|\geq 3$ and find
\begin{equation}
\begin{split}
|(\phi+iH\phi)(z)|&=\frac 1{\pi}\Big|\int_{\mathbb{R}}\frac{\phi(t)}{z-t}dt\Big|=\frac 1{\pi}\Big|\int_{\mathbb{R}}\Big(\frac 1{z-t}-\frac
1z\Big)\phi(t)dt\Big|\notag\\
&=\frac 1{\pi}\Big|\int_{-2}^2\frac t{z(z-t)}\,\phi(t)dt\Big|\leq\frac 6{\pi|z|^2}\int_0^2t\phi(t)dt\leq\frac 4{|z|^2}.\notag
\end{split}
\end{equation}
For the second, we assume that $|z|\leq\frac 12$ and find
\begin{equation}
\begin{split}
|(\phi+iH\phi)(z)|&\geq\frac 1{\pi}\Big|\int_0^1\frac 1{z-t}dt-\int_{-1}^0\frac 1{z-t}dt\Big|-\frac 1{\pi}\Big|\int_{1\leq|t|\leq
2}\frac{\phi(t)}{z-t}dt\Big|\notag\\
&\geq 2\log\frac 1{|z|}-c.\notag
\end{split}
\end{equation}
\texttt{First case}: Suppose there is some $c>0$ such that
$$|x_n|\leq c\sqrt{y_n}$$
for all $n$. Of course, then $|z_n|\leq c\sqrt{y_n}=c\sqrt{|I_n|}$ for all $n$.
\par Let $g=\phi+iH\phi$. Then $g\in BMOA$, since $\phi\in L^{\infty}$. For large $n$,
$$|g(z_n)|\geq\log\frac 1{|z_n|}\geq\log\frac 1{c\sqrt{|I_n|}}$$
and thus
$$\lim_{n\to+\infty}\frac{|g(z_n)|}{\log\frac 1{|I_n|}}\geq\frac 12.$$
\texttt{Second case}: Suppose that $\limsup_{n\to+\infty}\frac{|x_n|}{\sqrt{y_n}}=+\infty$ and, taking a subsequence, we may assume that
$$\lim_{n\to+\infty}\frac{|x_n|}{\sqrt{y_n}}=+\infty.$$
\par Now, let
$$\phi_n(x)=\phi\Big(\frac{x-x_n}{\sqrt{y_n}}\Big).$$
Then $\phi_n$ is supported in the interval $[x_n-2\sqrt{y_n},x_n+2\sqrt{y_n}]$ and, taking a further subsequence, we may assume that these intervals
are pairwise disjoint. This implies that the function
$$\psi=\sum_{k=1}^{+\infty}\phi_k$$
is in $L^{\infty}$ and, more precisely, $|\psi|\leq 1$ a.e. in $\mathbb{R}$. Finally, we define
$$g(z)=(\psi+iH\psi)(z)=\sum_{k=1}^{+\infty}(\phi+iH\phi)\Big(\frac{z-x_k}{\sqrt{y_k}}\Big),\qquad z\in\mathbb{H}.$$
Since $\psi\in L^{\infty}$, we have that $g\in BMOA$. Using one of the previous estimates, we see that for large $n$
$$\Big|(\phi+iH\phi)\Big(\frac{z_n-x_n}{\sqrt{y_n}}\Big)\Big|=|(\phi+iH\phi)(i\sqrt{y_n})|\geq\log\frac 1{\sqrt{y_n}}=\frac 12\log\frac 1{|I_n|}.$$
We may now assume that $|x_{n+1}|\leq\frac 12|x_n|$ and that $y_{n+1}\leq y_n$ for all $n$. Then for $k<n$ we have
$|\frac{x_n-x_k+iy_n}{\sqrt{y_k}}|\geq|\frac{x_n-x_k}{\sqrt{y_k}}|\geq\frac{|x_k|}{2\sqrt{y_k}}\geq 3$ and for $k>n$ we have
$|\frac{x_n-x_k+iy_n}{\sqrt{y_k}}|\geq|\frac{x_n-x_k}{\sqrt{y_k}}|\geq\frac{|x_n|}{2\sqrt{y_n}}\geq 3$. By our estimates,
$$\sum_{k=1}^{n-1}\Big|(\phi+iH\phi)\Big(\frac{z_n-x_k}{\sqrt{y_k}}\Big)\Big|\leq 4\sum_{k=1}^{n-1}\frac{y_k}{(x_n-x_k)^2+y_n^2}\leq
16\sum_{k=1}^{n-1}\frac{y_k}{x_k^2}$$
and
$$\sum_{k=n+1}^{+\infty}\Big|(\phi+iH\phi)\Big(\frac{z_n-x_k}{\sqrt{y_k}}\Big)\Big|\leq 4\sum_{k=n+1}^{+\infty}\frac{y_k}{(x_n-x_k)^2+y_n^2}\leq
4\sum_{k=n+1}^{+\infty}\frac{y_k}{x_k^2}.$$
Since we may choose the intervals so that $\frac{|x_n|}{\sqrt{y_n}}\to+\infty$ fast enough, we may suppose that
$$m=\sum_{k=1}^{+\infty}\frac{y_k}{x_k^2}<+\infty.$$
Therefore
$$|g(z_n)|\geq\frac 12\log\frac 1{|I_n|}-16m$$
and, finally,
$$\lim_{n\to+\infty}\frac{|g(z_n)|}{\log\frac 1{|I_n|}}\geq\frac 12.$$
\end{proof}

\end{document}